\documentclass[10pt,letterpaper]{article}
\usepackage[top=0.85in,left=2.75in,footskip=0.75in,marginparwidth=2in]{geometry}

\usepackage[utf8]{inputenc}

\usepackage{amsmath}   
\usepackage{amsfonts}  
\usepackage{amsthm}    
\usepackage{amssymb}   

\usepackage{cite}

\usepackage{nameref,hyperref}

\usepackage[right]{lineno}

\usepackage{microtype}
\DisableLigatures[f]{encoding = *, family = * }

\setlength{\parindent}{0.5cm}
\textwidth 5.25in 
\textheight 8.75in


\usepackage{changepage}

\usepackage[aboveskip=1pt,labelfont=bf,labelsep=period,singlelinecheck=off]{caption}

\makeatletter
\renewcommand{\@biblabel}[1]{\quad#1.}
\makeatother

\usepackage{lastpage,fancyhdr,graphicx}
\usepackage{epstopdf}
\pagestyle{myheadings}
\pagestyle{fancy}
\fancyhf{}
\rfoot{\thepage/\pageref{LastPage}}

\fancyheadoffset[L]{2.25in}
\fancyfootoffset[L]{2.25in}

\usepackage{color}

\definecolor{Gray}{gray}{.25}

\usepackage{graphicx}

\usepackage{sidecap}

\usepackage{wrapfig}
\usepackage[pscoord]{eso-pic}
\usepackage[fulladjust]{marginnote}
\reversemarginpar

\newtheorem{thm}{Theorem}[section]
\newtheorem{theorem}[thm]{Theorem}
\newtheorem{lemma}[thm]{Lemma}

\newtheorem{definition}[thm]{Definition}
\newtheorem{proposition}[thm]{Proposition}
\newtheorem{corollary}[thm]{Corollary}

\begin{document}
\vspace*{0.35in}

\begin{flushleft}
{\Large
\textbf\newline{A characterization of Fibonacci numbers}
}
\newline
\\
Giuseppe Pirillo
\\
\bigskip
Dipartimento di Matematica ed Informatica U. Dini\\
                Università di Firenze\\
                viale Morgagni 67/A\\
                50134 Firenze Italia
\\
\bigskip
* pirillo@math.unifi.it

\end{flushleft}

\section*{Abstract}
The link between
the equation $b(b+a)-a^2=0$ 
concerning the side $b$ and the diagonal $a$
of a regular pentagon
and the {\it Cassini identity}
$F_{i}F_{i+2}-F_{i+1}^2=(-1)^{i}$,
concerning three consecutive Fibonacci numbers,
is very strong.
In this paper we present our thesis that
the two mentioned equations
were  ``almost simultaneously'' discovered
by the {\it Pythagorean School}.

\section*{Introduction}
Let
$F_{0}=1$,
$F_{1}=1$ and, for $n\ge 2$,
$F_{n}=F_{n-2}+F_{n-1}$
be the {\it Fibonacci numbers}.
It is well known that
$\lim_{n \rightarrow \infty}\frac{F_{n+1}}{F_{n}}=\Phi =\frac{1+\sqrt 5}{2}$
and that
in theoretical computer science
the {\it Fibonacci word}
$f=101101011011010110 \dots$
is a {\it cutting sequence} representing
the {\it golden ratio} $\Phi$
(also called {\it Divina Proportione} by Luca Pacioli).
Concerning the Fibonacci numbers,
the Fibonacci word and the golden ratio, see
\cite{ArnouxSiegel},
\cite{Brother},
\cite{Fibonacci},
\cite{Irem},
\cite{Knuth68},
\cite{MR0451916},
\cite{GKP},
\cite{Matiyasevich0},
\cite{Matiyasevich1},
\cite{1997a},
\cite{pirillo2001},
\cite{2005a},
\cite{2005b},
\cite{SEABM},
\cite{Archimede}
\cite{origine}
and
\cite{wasteels}.

It is also well known that,
given three consecutive Fibonacci numbers
$F_{i}\le F_{i+1}<F_{i+2}$,
the following {\it Cassini identity}
$F_{i}F_{i+2}-F_{i+1}^2=(-1)^{i}$
holds.
In this paper we support our thesis
that
the discovery of incommensurability
and of the previous equalities
came ``almost simultaneously'',
most likely
first the pythagorean equality and
immediately after the Cassini identity.

Indeed the Cassini identity is strictly related
to the studies and the fundamental results
of the Pythagorean School (hereafter simply School)
on the incommensurability:
{\it side and diagonal
of the regular pentagon
are incommensurable}
(see Figure \ref{pentagon}).
The result:
{\it if $b$ is the side and
$a$ is the diagonal of a regular pentagon,
then
$b: a = a : (b+a)$
and
$b(b+a)-a^2=0$}
precedes of a very short period of time
the discoveries of Fibonacci numbers
and Cassini identity
$F_{i}F_{i+2}-F_{i+1}^2=(-1)^{i}$
see \cite{SEABM}.

\section*{The irrational number $\Phi$}

The School tried for a long time
to find a common measure
between the diagonal and the side of the regular pentagon.
In the proof of these fundamental results
(that we shortly recall hereafter)
the following
{\it Pythagorean Proposition}
\ref{Pythagorean Proposition}
(see \cite{2005b}) plays a crucial role
(and the same will happen
in the first proof
of the main result of this paper,
Proposition \ref{HippasusisFibonacci}).

\begin{proposition}\label{Pythagorean Proposition}
({\bf Pythagorean Proposition.})
A strictly decreasing sequence of positive
integers is necessarily finite.
\end{proposition}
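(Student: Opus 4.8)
The plan is to exploit the single distinguishing feature of the integers among the positive reals, namely their \emph{discreteness}: two distinct integers never lie closer together than one unit. Writing the sequence as $a_1 > a_2 > a_3 > \cdots$, the hypothesis that it is strictly decreasing and integer-valued forces $a_i - a_{i+1} \ge 1$ for every admissible index $i$. This inequality is the entire engine of the argument, and it is exactly what fails for positive rationals or reals; so the crux is recognizing that integrality, and not merely positivity, must be invoked.

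First I would show that any initial segment cannot be long relative to its first term. Suppose $a_1, \ldots, a_n$ are the first $n$ terms. Summing the unit gaps telescopes:
\begin{equation*}
a_1 - a_n = \sum_{i=1}^{n-1}(a_i - a_{i+1}) \ge n-1.
\end{equation*}
Since every term is a positive integer we have $a_n \ge 1$, and therefore $a_1 \ge a_n + (n-1) \ge n$. Thus any strictly decreasing run of positive integers beginning at $a_1$ has at most $a_1$ terms.

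This bound finishes the argument immediately: a sequence with more than $a_1$ terms would violate the displayed inequality, so the sequence cannot be infinite and is therefore finite, of length at most $a_1$. In particular the proof is constructive, supplying an explicit ceiling on the length rather than merely asserting finiteness.

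Alternatively, and perhaps closer to the spirit in which the School would have reasoned, one could argue by \emph{infinite descent}: an infinite strictly decreasing sequence would exhibit, via the set $\{a_1, a_2, \ldots\}$, a non-empty set of positive integers possessing no least element, contradicting the well-ordering of the positive integers. I expect no genuine obstacle along either route; the only point demanding care is the explicit appeal to integrality in the step $a_i - a_{i+1} \ge 1$, since the conclusion is plainly false without it.
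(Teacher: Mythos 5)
Your proof is correct, and both of your routes (the telescoping bound and the appeal to well-ordering) are sound; the only substantive step, $a_i - a_{i+1} \ge 1$ for distinct integers, is exactly where integrality enters, and you rightly flag it. The comparison with the paper is somewhat one-sided, however: the paper states the Pythagorean Proposition with no proof at all, treating it as a primitive principle of the School (it is, in effect, the form of well-ordering that the author wants to foreground, and he later uses it as the terminating device in the first proof of Proposition \ref{HippasusisFibonacci}, while reserving the equivalent \emph{minimum principle} for the second proof). So your argument does not diverge from the paper's so much as supply what the paper deliberately leaves axiomatic. What your first route buys beyond the paper is a quantitative refinement --- an explicit ceiling of $a_1$ on the length of any such sequence rather than bare finiteness --- and your second route makes explicit that the proposition is interchangeable with the minimum principle the paper invokes elsewhere, which is worth knowing since deriving the proposition from well-ordering and then using it to justify descent arguments is circular only in appearance: one of the two must be taken as the foundational axiom, and the paper's choice is to take this one.
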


A common measure of diagonal
and side of a regular pentagon implies
the existence of a
segment $U$
and two positive integers $\beta$ and $\alpha $
such that $U$ is contained $\beta$ times in $b$,
the side,
and $\alpha $ time in $a$,
the diagonal.
Using elementary results on similar triangles,
we easily reach the equalities
$\beta : \alpha  = \alpha  : (\beta +\alpha )$
and
$\beta (\beta +\alpha )=\alpha ^2$.

But, two such integers $\beta$ and $\alpha$
do not exist by an old
well-known odd-even argument:
i) $\beta$ and $\alpha$ both odd
implies
$\beta (\beta + \alpha)$ even
and
$\alpha ^2$ odd
(contradiction),
ii) $\beta$ odd and $\alpha$ even 
implies
$\beta (\beta + \alpha)$ odd
and
$\alpha ^2$ even
(contradiction),
iii) $\beta$ even and $\alpha$ odd
implies
$\beta (\beta + \alpha)$ even
and
$\alpha ^2$ odd
(contradiction),
iv) $\beta$ and $\alpha$ both even then,
using the Pythagorean Proposition
\ref{Pythagorean Proposition},
we retrieve one of the three previous cases
i), ii) and iii)
(contradiction).
So
$\beta$
and
$\alpha$
cannot be both integers.
So {\it side} and {\it diagonal}
of the {\it regular pentagon}
cannot have a common measure
and the following theorem is proved.

\begin{figure}[!hbp]
\centering
\includegraphics[height=6cm, width=5cm, angle=0,
keepaspectratio]{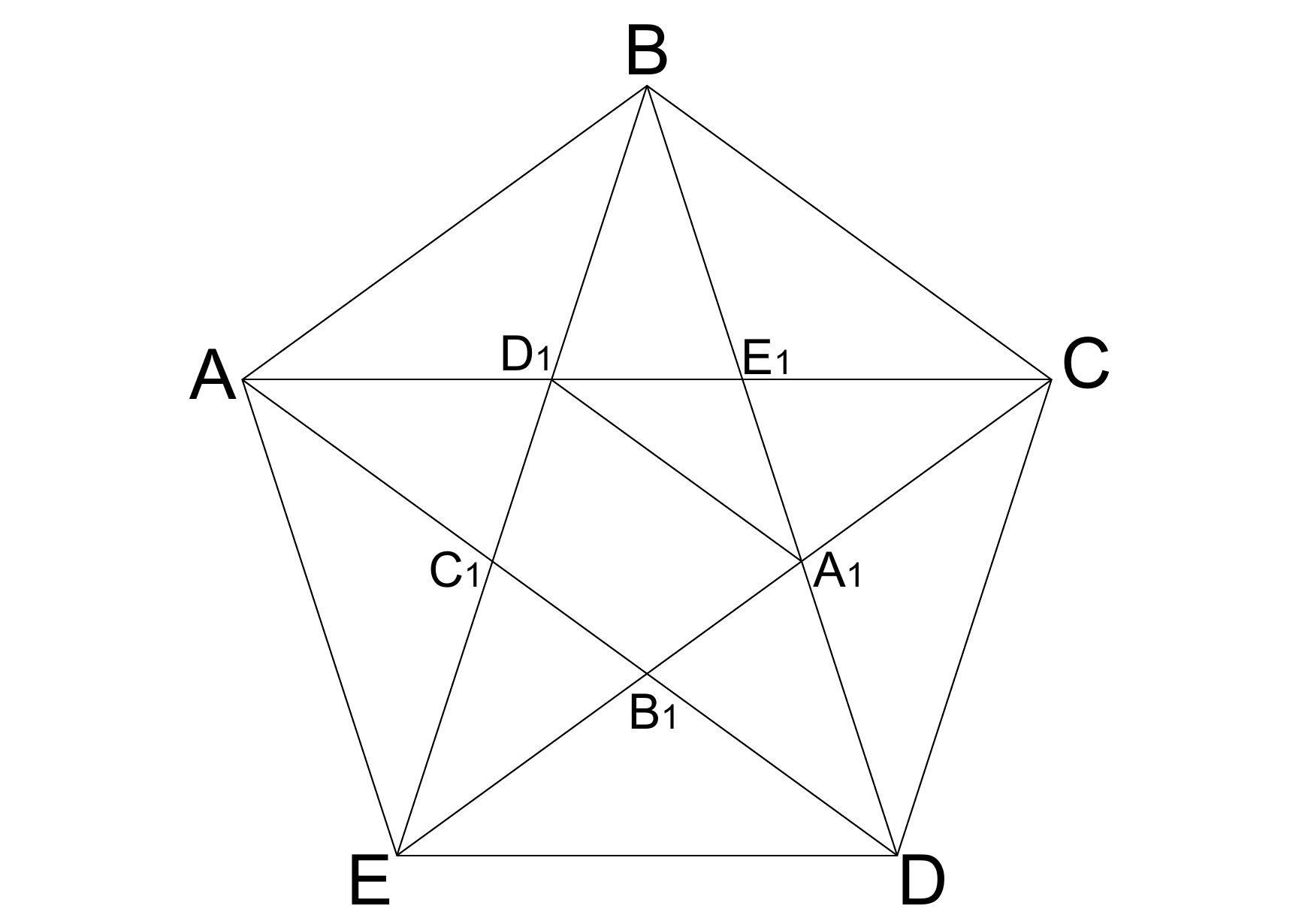}
\caption{Regular pentagon}
\label{pentagon}
\end{figure}

\begin{theorem}\label{theorem}
Side and diagonal
of the regular pentagon
are incommensurable.
\end{theorem}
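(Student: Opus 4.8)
The plan is to argue by contradiction: I would assume that the side $b$ and the diagonal $a$ of the regular pentagon admit a common measure, extract from this assumption a Diophantine identity for the integer multiplicities, and then show that this identity is impossible by combining a parity (odd--even) analysis with the Pythagorean Proposition \ref{Pythagorean Proposition}.

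First I would suppose that $b$ and $a$ are commensurable. By definition this produces a segment $U$ and positive integers $\beta,\alpha$ with $b=\beta U$ and $a=\alpha U$. The geometric heart of the reduction is the self-similarity of the pentagon: the five diagonals of a regular pentagon intersect so as to determine a smaller regular pentagon, and elementary relations between similar triangles of the original figure and this inner one yield the continued proportion $\beta:\alpha=\alpha:(\beta+\alpha)$. Cross-multiplying gives the key identity $\beta(\beta+\alpha)=\alpha^2$, which any such pair of integers would have to satisfy.

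Next I would carry out the odd--even analysis on the pair $(\beta,\alpha)$. Three of the four parity combinations are immediate: whenever at least one of $\beta,\alpha$ is odd, comparing the parity of the left-hand side $\beta(\beta+\alpha)$ with that of the right-hand side $\alpha^2$ forces an even number to equal an odd one, a contradiction. This disposes of cases i), ii) and iii).

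The main obstacle is the remaining case, where $\beta$ and $\alpha$ are both even, since here no parity clash appears at once. The device is infinite descent: if both are even, then $2U$ is itself a common measure, with multiplicities $\beta/2$ and $\alpha/2$, and these again satisfy the same proportion and hence the same identity $\frac{\beta}{2}\bigl(\frac{\beta}{2}+\frac{\alpha}{2}\bigr)=\bigl(\frac{\alpha}{2}\bigr)^2$. Since $\beta/2<\beta$, repeating this halving would generate a strictly decreasing sequence of positive integers, which the Pythagorean Proposition \ref{Pythagorean Proposition} forbids. Hence the even--even case cannot persist and must eventually reduce to one of i), ii), iii), again a contradiction. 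Therefore no integers $\beta,\alpha$ can exist, so side and diagonal share no common measure, which is exactly the assertion of the theorem.
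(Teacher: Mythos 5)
Your proposal is correct and follows essentially the same route as the paper: the same reduction via similar triangles to $\beta(\beta+\alpha)=\alpha^2$, the same three parity contradictions, and the same use of the Pythagorean Proposition to terminate the descent in the both-even case. Your write-up of case iv) is in fact slightly more explicit than the paper's, which merely asserts that the Pythagorean Proposition lets one ``retrieve'' one of the earlier cases.
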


\section*{Fibonacci numbers and their relation
with incommensurability}

We will present hereafter
an argument that shows how
the Fibonacci numbers and
the Cassini identity
appeared naturally
during the
development of the argument of the
incommensurability.
Several
attempts
to find a common
measure of
side and diagonal
of the regular pentagon
were not successful
and
will hereafter be examined in depth.
Consider two Propositions on the
triangle well known today
and also well known to the School:

\begin{proposition}\label{proposition1}
The greatest side of a triangle is that opposite
to the greatest angle.
\end{proposition}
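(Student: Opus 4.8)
The plan is to follow the classical synthetic route of Euclid's \emph{Elements}, which is elementary and in keeping with the Pythagorean setting of the paper. Since ``greatest'' is decided by pairwise comparisons, it suffices to prove the two-angle version: in any triangle the larger of two angles is subtended by the longer of the two opposite sides. Applying this with the greatest angle against each of the other two then shows that its opposite side exceeds both others, which is exactly the assertion of Proposition~\ref{proposition1}.

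First I would recall two facts surely available to the School and provable from the basic congruence criteria: the \emph{isosceles base-angle theorem} (equal sides subtend equal angles) and the \emph{exterior angle theorem} (an exterior angle of a triangle strictly exceeds either non-adjacent interior angle). The key step is then to prove the auxiliary direction, that the longer side subtends the greater angle. Given a triangle $ABC$ with $BC>AB$, I would mark the point $D$ on the segment $BC$ with $BD=BA$ and join $AD$. Because $BD=BA<BC$, the point $D$ lies strictly between $B$ and $C$, so the ray $AD$ falls inside the angle at $A$ and hence $\angle BAC>\angle BAD$. The triangle $ABD$ is isosceles, giving $\angle BAD=\angle BDA$; and $\angle BDA$, being exterior to triangle $ADC$, exceeds $\angle BCA$. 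Chaining these yields $\angle BAC>\angle BAD=\angle BDA>\angle BCA$, so the longer side $BC$ indeed subtends the larger angle.

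Finally I would obtain the proposition by trichotomy. Suppose one angle exceeds another, say $\angle B>\angle C$, while their opposite sides satisfy $b\le c$. If $b=c$, the isosceles theorem forces $\angle B=\angle C$, a contradiction; if $b<c$, the auxiliary direction just proved forces $\angle B<\angle C$, again a contradiction. Hence $b>c$, and the pairwise claim is established. I expect the main obstacle to be the auxiliary direction rather than this reduction: one must justify carefully the two strict inequalities in the chain, namely the betweenness of $D$ (so that $AD$ lies inside $\angle BAC$) and the application of the exterior angle theorem. Once that lemma is in hand, the closing trichotomy argument is entirely routine. (A modern alternative uses the law of sines together with the fact that, for angles of a single triangle, $\angle B>\angle C$ is equivalent to $\sin B>\sin C$; but this would be anachronistic in the present historical context.)
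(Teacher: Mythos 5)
Your proof is correct: it is the classical Euclid I.18--I.19 argument (longer side subtends greater angle via the isosceles construction and the exterior angle theorem, then the converse by trichotomy), and the chain of strict inequalities is justified properly. Note, however, that the paper offers no proof of Proposition~\ref{proposition1} at all; it is stated as a classical fact ``well known to the School'' and used as background, so there is nothing to compare against --- your supplied argument is simply the standard one and is consistent with the historical framing of the paper.
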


\begin{proposition}\label{proposition2}
The sum of two sides is greater than the third side.
\end{proposition}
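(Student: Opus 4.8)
The plan is to establish the triangle inequality by the classical Euclidean construction (\emph{Elements} I.20), leaning on Proposition \ref{proposition1}, which was just stated. Consider a triangle with vertices $A$, $B$, $C$; it suffices to prove $AB + AC > BC$, since the remaining two inequalities follow by relabelling the vertices. The idea is to build a single auxiliary triangle in which the sum $AB + AC$ appears as one whole side, so that Proposition \ref{proposition1} can be invoked to compare that side against $BC$.

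First I would extend the side $BA$ beyond $A$ to a point $D$ chosen so that $AD = AC$. By construction $BD = BA + AD = BA + AC$, and so it is enough to prove $BD > BC$. Next, since the triangle $ACD$ is isosceles with $AD = AC$, its base angles are equal, giving $\angle ADC = \angle ACD$. Because the ray $CA$ lies in the interior of $\angle BCD$, that angle splits as $\angle BCD = \angle BCA + \angle ACD > \angle ACD = \angle ADC$. Finally I would apply Proposition \ref{proposition1} to the triangle $BCD$: since $\angle BCD > \angle BDC$, the side $BD$ opposite the larger angle exceeds the side $BC$ opposite the smaller one. Hence $AB + AC = BD > BC$, which is exactly the assertion.

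The one step that demands care — and the main obstacle for a fully rigorous account — is the strict comparison $\angle BCD > \angle ACD$. This rests on the claim that $A$ lies strictly between $B$ and $D$ on the extended line, so that the ray $CA$ genuinely falls inside $\angle BCD$ and the angle decomposes into two positive parts. This is a betweenness (ordering) fact issuing from the construction rather than a metric computation; in a modern axiomatic treatment it would be discharged by the betweenness axioms, whereas in the setting of the School it is taken as evident from the figure. Granting it, the argument is complete.
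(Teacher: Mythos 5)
Your argument is the classical Euclid \emph{Elements} I.20 construction and it is correct: extending $BA$ to $D$ with $AD=AC$, using the isosceles triangle $ACD$ to get $\angle ADC=\angle ACD$, observing $\angle BCD>\angle ADC$, and concluding $BD>BC$ from the comparison of sides opposite unequal angles. Note, however, that the paper itself offers no proof of this proposition at all: it is stated (together with Proposition \ref{proposition1}) as a fact ``well known today and also well known to the School,'' serving only as an ingredient for the later discussion of the bounds $\beta<\alpha<2\beta$. So you have supplied a genuine proof where the paper merely cites classical knowledge. Your proof does what the paper implicitly assumes can be done, namely derive Proposition \ref{proposition2} from Proposition \ref{proposition1}; your closing remark about the betweenness of $A$ on segment $BD$ is the right caveat, and it is exactly the kind of step that Euclid (and the School) would read off the figure rather than axiomatize. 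The only point worth watching is that Proposition \ref{proposition1} as stated speaks of the \emph{greatest} side and angle, whereas you need the pairwise form ``the greater angle is subtended by the greater side'' (Euclid I.19); for a comparison of two sides and their opposite angles within one triangle these amount to the same ordering fact, so nothing is lost.
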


Considering the isosceles triangle
formed by two consecutive sides
and by a diagonal
of a regular pentagon,
the School would have noticed,
by Proposition \ref{proposition1},
the inequality
$\beta <\alpha $
and,
by Proposition \ref{proposition2},
the inequality
$\alpha <2\beta $.
This is enough to immediately eliminate
the side as a common measure
($\beta =1$).

Now, let $\beta \ge 2$.
Being $\beta$ and $\alpha$ integers, from

\centerline{$\beta <\alpha < 2\beta$,}

\noindent we have

\centerline{$\beta + 1 \le \alpha \le 2\beta -1$.}

Considering the necessary equality
$\beta (\beta +\alpha )=\alpha ^2$
and using the above lower bound and upper bound,
the School easily eliminated
the following segments
as common measure:
the half of the side
($2(2+3)-3^2\not= 0$),
the third of the side
($3(3+4)-4^2\not= 0$ and $3(3+5)-5^2\not= 0$),
the fourth part of the side
($4(4+5)-5^2\not= 0$,
$4(4+6)-6^2\not= 0$
e
$4(4+7)-7^2\not= 0$)
and so on.

On the other hand,
continuing in this way
the calculation is increasingly long and difficult
as, for each $\beta >1$,
one must consider
$\beta -1$ candidates for $\alpha$.
The departing geometric problem
(find a common measure $U$)
is now an arithmetic problem:
{\it given an integer $\beta $
does there exist
an integer $\alpha \ge \beta $
such that $\beta (\beta +\alpha )-\alpha ^2=0$?}

When the recalled argument of incommensurability
was completed and consequently it was clear
that the answer to this question would be ``NO''
for each $\beta$,
we believe that the School
has considered the just obtained result
as a motivation for a new research
and has been argumented as follows:
as $\beta (\beta +\alpha )-\alpha ^2$
is never $0$, we wish to see for what values
of $\beta$ and $\alpha$
the difference between the greatest and the smallest
of the numbers $\beta (\beta +\alpha )$ and $\alpha ^2$
assumes the value 1,
which is the minimum possible one.
This is a typical curiosity of mathematicians:
when they solve a problem,
their attention is immediately attracted
by the new and often numerous problems
that the solution always carries with it.
So, we simply believe that,
after the discovery of the incommensurability,
the School has focused
on this new problem.

Today, to find the above recalled values of
$\beta$ and $\alpha$
is very easy
using a computer.
It is possible to write a program
that searches, finds
and puts all these values in the following table.
My brother Mario wrote the program
and this is what happens:

\bigskip

\centerline{
\begin{tabular}{|c|c|c|c|c|}
\hline
$\beta $ & $\alpha $ & $\alpha +\beta $ & $\beta (\alpha +\beta)$ & $\alpha ^2$\\
\hline
$1$ & $1$ & $2$ & $1^2+1$ & $1^2$\\
\hline
$1$ & $2$ & $3$ & $2^2-1$ & $2^2$\\
\hline
$2$ & $3$ & $5$ & $3^2+1$ & $3^2$\\
\hline
$3$ & $5$ & $8$ & $5^2-1$ & $5^2$\\
\hline
$5$ & $8$ & $13$ & $8^2+1$ & $8^2$\\
\hline
$8$ & $13$ & $21$ & $13^2-1$ & $13^2$\\
\hline
$13$ & $21$ & $34$ & $21^2+1$ & $21^2$\\
\hline
$21$ & $34$ & $55$ & $34^2-1$ & $34^2$\\
\hline
$34$ & $55$ & $89$ & $55^2+1$ & $55^2$\\
\hline
$55$ & $89$ & $144$ & $89^2-1$ & $89^2$\\
\hline
$89$ & $144$ & $233$ & $144^2+1$ & $144^2$\\
\hline
$144$ & $233$ & $377$ & $233^2-1$ & $233^2$\\
\hline
$233$ & $377$ & $610$ & $377^2+1$ & $377^2$\\
\hline
$377$ & $610$ & $987$ & $610^2-1$ & $610^2$\\
\hline
$610$ & $987$ & $1597$ & $987^2+1$ & $987^2$\\
\hline
$987$ & $1597$ & $2584$ & $1597^2-1$ & $1597^2$\\
\hline
\end{tabular}
}

\bigskip

If, as I think, the School has really tried to find these values
of $\beta$ and $\alpha$
then they have all noticed the peculiarity
of the numbers in the table.
The Fibonacci numbers are
in the first, second and third column
and,
in addition, the square of the Fibonacci numbers
are in the fifth column
while the fourth column contains alternately
the predecessor and the successor of these squares,
see \cite{SEABM}.

Now, let $i\ge 0$ and $F_{i}$
the $i^{th}$ Fibonacci number.
{\it Does there exist an integer $\alpha \ge \beta$ such that
the difference between the greatest and the smallest
of the numbers $F_{i}(F_{i}+\alpha )$ and $\alpha ^2$
assumes the value 1?} Sure, it exists.
The table shows
that, for each $i$, $1\le F_i\le 1000$,
the required number 
$\alpha$
is exactly
$F_{i+1}$
and
$F_{i}(F_{i}+F_{i+1})-F_{i+1}^2=(-1)^{i}$.
Being $F_{i}+F_{i+1}=F_{i+2}$,
this equality becomes
$F_{i}F_{i+2}-F_{i+1}^2=(-1)^{i}$
and, as it is well-known,
the following lemma
holds (see for instance \cite{Irem}).

\begin{lemma}\label{Cassini identity}
{\bf Cassini identity.}
For each non negative integer $i$
and for each Fibonacci number $F_{i}$
the following equality holds

\centerline{$F_{i}F_{i+2}-F_{i+1}^2=(-1)^{i}$.}
\end{lemma}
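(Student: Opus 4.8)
The plan is to prove the Cassini identity $F_iF_{i+2}-F_{i+1}^2=(-1)^i$ by induction on $i$, using the recurrence $F_{i+2}=F_i+F_{i+1}$ that defines the Fibonacci numbers. First I would verify the base case: with $F_0=1$, $F_1=1$, $F_2=2$, we compute $F_0F_2-F_1^2=1\cdot 2-1^2=1=(-1)^0$, which establishes the claim for $i=0$.

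For the inductive step, I would assume the identity holds for some $i\ge 0$, that is $F_iF_{i+2}-F_{i+1}^2=(-1)^i$, and prove it for $i+1$, namely that $F_{i+1}F_{i+3}-F_{i+2}^2=(-1)^{i+1}$. The key algebraic move is to substitute $F_{i+3}=F_{i+1}+F_{i+2}$ into the left-hand side, giving $F_{i+1}(F_{i+1}+F_{i+2})-F_{i+2}^2=F_{i+1}^2+F_{i+1}F_{i+2}-F_{i+2}^2$. To connect this with the inductive hypothesis I would similarly replace $F_{i+2}$ by $F_i+F_{i+1}$ in the final term $F_{i+2}^2$, or more directly factor so that the expression becomes $-(F_iF_{i+2}-F_{i+1}^2)$. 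Explicitly, rewriting $F_{i+2}^2=F_{i+2}(F_i+F_{i+1})=F_iF_{i+2}+F_{i+1}F_{i+2}$, the left-hand side collapses to $F_{i+1}^2-F_iF_{i+2}=-(F_iF_{i+2}-F_{i+1}^2)$, which by the inductive hypothesis equals $-(-1)^i=(-1)^{i+1}$, completing the step.

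The argument presents no real obstacle; the only point requiring care is keeping the index shifts and sign bookkeeping straight, since the telescoping of the sign is exactly what produces the alternating $(-1)^i$. Indeed, the structure of the proof already mirrors the alternating pattern visible in the fourth column of the table, where the quantity $F_{i+1}^2\pm 1$ toggles between predecessor and successor of the square. I would note that the same identity admits a one-line proof via the determinant of the matrix power $\begin{pmatrix}1&1\\1&0\end{pmatrix}^{\,i+1}$, whose determinant is $(-1)^{i+1}$ and whose entries are consecutive Fibonacci numbers; this alternative makes the sign behavior transparent, but the direct induction above is the most elementary route and the one most in keeping with the historical spirit of the paper.
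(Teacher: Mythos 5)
Your induction is correct and complete: the base case $F_0F_2-F_1^2=1\cdot 2-1=(-1)^0$ matches the paper's convention $F_0=F_1=1$, and the inductive step, substituting $F_{i+3}=F_{i+1}+F_{i+2}$ and $F_{i+2}=F_i+F_{i+1}$ so that the left-hand side collapses to $-(F_iF_{i+2}-F_{i+1}^2)$, is exactly the standard argument and the sign bookkeeping is right. The one thing to note is that the paper does not actually prove this lemma at all: it states the identity as well known, points to the numerical table as evidence for $1\le F_i\le 1000$, and cites an external reference for the proof. So your proposal supplies a proof where the paper offers only a citation; what you wrote is precisely the ``trivial induction'' that Matiyasevich alludes to in the passage quoted later in the paper, and it is the natural companion to the paper's Lemmas \ref{Hippasus1}--\ref{Hippasus8}, which handle the (harder) converse direction. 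Your closing remark about the determinant of $\begin{pmatrix}1&1\\1&0\end{pmatrix}^{i+1}$ is a correct and well-known alternative, though with the paper's indexing one should check the offset between the matrix power and the subscript of the Fibonacci numbers appearing in its entries.
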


As we have seen before,
step by step the School
has picked up new Fibonacci numbers.
Each new one discovered
corresponded to a more accurate (but not exact!)
measurement of the side and diagonal
of the regular pentagon. In this sense,
the School has discovered and proved the equality
$\lim_{n \rightarrow \infty}\frac{F_{n+1}}{F_{n}}=\Phi$,
certainly not in the very precise form
of the current modern epsilon-delta definition
that it has today,
but surely in the sense
that the difference
$\Phi -\frac{F_{n+1}}{F_{n}}$
became ever smaller
and smaller.

\section*{Cassini identity and characterization of Fibonacci numbers}

We introduce a definition
which will be crucial in the rest of the paper.

\begin{definition}\label{Definition}
{\it Let $\beta \ge 1$ an integer.
When there exists
an integer $\alpha $, $\alpha\ge\beta$,
such that, for some non-negative integer $\gamma$,
the equality

\centerline{$\beta (\beta +\alpha )-\alpha ^2=(-1)^{\gamma }$}

\noindent holds, then we say that $\beta$
is a {\rm Hippasus number}
and that $\alpha$
is a {\rm Hippasus successor} of $\beta$.
}
\end{definition}

For the aims of this paper, using the previous definition
\footnote{
This terminology
seems suitable.
Tradition, see \cite{von},
attributes to Hippasus
the discovery of incommensurability
and our thesis is the following:
the discoveries of incommensurability
and of a particular class of numbers
came simultaneously,
see \cite{SEABM}.
So these numbers that we show here
to be Fibonacci numbers
can provisionally be called
{\it Hippasus numbers}.},
we can obtain a more suitable reformulation
of the {\it Cassini identity}
\ref{Cassini identity}:

\begin{proposition}\label{FibonacciisHippasus1}
For each $i\ge 0$ the Fibonacci number $F_i$ is
a Hippasus number and $F_{i+i}$ is a
Hippasus successor of it.
\end{proposition}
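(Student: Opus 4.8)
The plan is to verify the two requirements of Definition \ref{Definition} directly, taking $\beta = F_i$ and proposing $\alpha = F_{i+1}$ as the Hippasus successor (the statement's $F_{i+i}$ should read $F_{i+1}$). First I would confirm the order condition $\alpha \ge \beta$, that is $F_{i+1} \ge F_i$. This is immediate from the defining recurrence: one has $F_1 = F_0 = 1$, while for every $i \ge 1$ the relation $F_{i+1} = F_{i-1} + F_i$ together with $F_{i-1} \ge 1$ gives $F_{i+1} > F_i$. Hence $F_{i+1} \ge F_i$ for all $i \ge 0$, so $F_{i+1}$ is an admissible choice of $\alpha$.

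Next I would produce the required equality $\beta(\beta + \alpha) - \alpha^2 = (-1)^{\gamma}$ for some non-negative integer $\gamma$. The key step is to rewrite the first factor using $F_i + F_{i+1} = F_{i+2}$, itself nothing more than a restatement of the recurrence, which turns $F_i(F_i + F_{i+1})$ into $F_i F_{i+2}$. Substituting,
\[
F_i(F_i + F_{i+1}) - F_{i+1}^2 = F_i F_{i+2} - F_{i+1}^2,
\]
and the right-hand side is precisely the quantity governed by the Cassini identity (Lemma \ref{Cassini identity}), giving $F_i F_{i+2} - F_{i+1}^2 = (-1)^i$.

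Finally I would set $\gamma = i$. Since $i \ge 0$, this is a legitimate non-negative integer, and the computation above yields $F_i(F_i + F_{i+1}) - F_{i+1}^2 = (-1)^i = (-1)^{\gamma}$. Together with the order condition from the first step, this exhibits $F_i$ as a Hippasus number having $F_{i+1}$ as a Hippasus successor, which is exactly the assertion.

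As for the main obstacle, there is essentially none: Lemma \ref{Cassini identity} has already carried out the substantive work, and the proposition is merely a translation of that identity into the language of Definition \ref{Definition}. The only points demanding a little care are the boundary case $i = 0$, where the order condition degenerates to the equality $F_1 = F_0$ (still allowed, since the definition requires only $\alpha \ge \beta$), and the bookkeeping check that the choice $\gamma = i$ meets the non-negativity clause of the definition.
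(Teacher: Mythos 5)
Your proof is correct and matches the paper's intent exactly: the paper gives no separate proof but presents this proposition as an immediate reformulation of the Cassini identity (Lemma \ref{Cassini identity}) via $F_i+F_{i+1}=F_{i+2}$, which is precisely your argument, including the reading of the typo $F_{i+i}$ as $F_{i+1}$. Your added checks of the order condition $F_{i+1}\ge F_i$ and the choice $\gamma=i$ are sound and only make explicit what the paper leaves implicit.
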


The following Lemma \ref{FibonacciisHippasus2}
offers an even more precise reformulation
of the {\it Cassini identity}
\ref{Cassini identity}.
In order to prove Proposition
\ref{HippasusisFibonacci}
we need several lemmas.

\begin{lemma}\label{Hippasus1}
The number $1$ is a Hippasus number
and $1$ itself is one of its Hippasus successor.
\end{lemma}

\begin{proof}
The equality $1(1+1)-1^2=1$ holds.
\end{proof}

\begin{lemma}\label{Hippasus2}
The number $1$ has also $2$
as a Hippasus successor.
\end{lemma}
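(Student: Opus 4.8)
The plan is to verify the defining equality of Definition \ref{Definition} directly for the specific pair $\beta = 1$ and $\alpha = 2$. First I would check the admissibility condition $\alpha \ge \beta$, which here reads $2 \ge 1$ and plainly holds, so $\alpha = 2$ is a legitimate candidate for a Hippasus successor of $\beta = 1$. Next I would compute the quantity $\beta(\beta + \alpha) - \alpha^2$ at these values, obtaining $1 \cdot (1 + 2) - 2^2 = 3 - 4 = -1$. It then remains only to exhibit a non-negative integer $\gamma$ with $(-1)^\gamma = -1$; taking $\gamma = 1$ (or any odd non-negative integer) supplies such a witness and completes the verification.

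Since all three requirements of Definition \ref{Definition} are thereby met — the inequality $\alpha \ge \beta$, the algebraic identity $\beta(\beta + \alpha) - \alpha^2 = (-1)^\gamma$, and the existence of a suitable parity exponent $\gamma$ — we may conclude that $2$ is a Hippasus successor of $1$. There is no genuine obstacle in this argument: the statement reduces to a single arithmetic check. The only subtlety worth flagging is that, in contrast to Lemma \ref{Hippasus1}, where the difference evaluated to $+1 = (-1)^0$ and an even exponent sufficed, here the difference comes out \emph{negative}, so one must be careful to record an odd value of $\gamma$. This case illustrates, in the simplest possible way, the alternation of signs that will later be systematized through the Cassini identity \ref{Cassini identity}, and it shows that a single $\beta$ may admit more than one Hippasus successor.
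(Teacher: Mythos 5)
Your proof is correct and is essentially the paper's own argument: a direct verification that $1(1+2)-2^2=-1=(-1)^1$, with the (easy) check $\alpha\ge\beta$ made explicit. No further comment is needed.
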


\begin{proof}
The equality $1(1+2)-2^2=-1$ holds.
\end{proof}

\begin{lemma}\label{Hippasus3}
No positive integer different from $1$ and $2$
is a Hippasus successor of $1$.
\end{lemma}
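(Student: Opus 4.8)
The plan is to reduce the claim to solving two elementary quadratic equations. By Definition~\ref{Definition}, an integer $\alpha \ge 1$ is a Hippasus successor of $\beta = 1$ precisely when there is a non-negative integer $\gamma$ with $1(1+\alpha) - \alpha^2 = (-1)^\gamma$; since $(-1)^\gamma$ takes only the values $+1$ and $-1$, this means that $1 + \alpha - \alpha^2$ must equal either $+1$ or $-1$. So the whole statement amounts to determining all positive integers $\alpha$ satisfying one of these two equations.

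First I would treat the case $(-1)^\gamma = 1$. Here the equation $1 + \alpha - \alpha^2 = 1$ simplifies to $\alpha(\alpha - 1) = 0$, whose only non-negative integer solutions are $\alpha = 0$ and $\alpha = 1$. Since a Hippasus successor must be a positive integer with $\alpha \ge \beta = 1$, only $\alpha = 1$ survives in this case.

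Next I would treat the case $(-1)^\gamma = -1$. Here $1 + \alpha - \alpha^2 = -1$ rearranges to $\alpha^2 - \alpha - 2 = 0$, that is $(\alpha - 2)(\alpha + 1) = 0$, so the only roots are $\alpha = 2$ and $\alpha = -1$; again only the positive value $\alpha = 2$ is admissible.

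Combining the two cases, the only positive integers that can serve as a Hippasus successor of $1$ are $1$ and $2$, which is exactly the claim. There is no genuine obstacle here: the argument is a short case split on the sign of $(-1)^\gamma$ followed by factoring a quadratic. The only points requiring mild care are to examine both sign possibilities separately and to discard the spurious roots $\alpha = 0$ and $\alpha = -1$, which fail the positivity and $\alpha \ge \beta$ requirements imposed by Definition~\ref{Definition}.
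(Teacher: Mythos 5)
Your proof is correct and amounts to the same elementary manipulation of the defining equation that the paper uses. The paper's own proof is a one-line inequality --- for $n>2$ one has $1(1+n)-n^2\le -5$, so the expression can never equal $(-1)^\gamma$ --- whereas you instead factor the two quadratics $1+\alpha-\alpha^2=\pm 1$ exactly; both routes are sound, and yours has the minor bonus of simultaneously re-deriving that $1$ and $2$ genuinely are Hippasus successors of $1$ (the content of Lemmas \ref{Hippasus1} and \ref{Hippasus2}).
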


\begin{proof}
For $n>2$,
we have
$1(1+n)-n^2\le -5$.
\end{proof}

\begin{lemma}\label{Hippasus4}
If $\beta >1$ is a Hippasus number and $\alpha$
is one of its Hippasus successors then $\alpha >\beta$.
\end{lemma}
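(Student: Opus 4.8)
The plan is to observe that the inequality $\alpha\ge\beta$ is already built into the Definition \ref{Definition} of Hippasus successor, so the entire content of the lemma is to upgrade this weak inequality to a strict one when $\beta>1$. In other words, I only need to rule out the boundary case $\alpha=\beta$, and everything will follow by contradiction.

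First I would suppose, toward a contradiction, that $\alpha=\beta$ while $\beta>1$. Substituting into the defining equation of Definition \ref{Definition}, I compute
\[
\beta(\beta+\alpha)-\alpha^2=\beta(\beta+\beta)-\beta^2=2\beta^2-\beta^2=\beta^2 .
\]
Since $\alpha$ is assumed to be a Hippasus successor of $\beta$, this quantity must equal $(-1)^\gamma$ for some non-negative integer $\gamma$, hence $\beta^2=\pm 1$. As $\beta$ is a positive integer, the value $-1$ is impossible, so $\beta^2=1$, forcing $\beta=1$.

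This contradicts the hypothesis $\beta>1$, and therefore the case $\alpha=\beta$ cannot occur. Combined with the inequality $\alpha\ge\beta$ coming from the Definition, I conclude $\alpha>\beta$, as required. I do not expect any genuine obstacle here: the argument is a single direct substitution followed by the elementary remark that the only positive integer whose square is $1$ is $\beta=1$. The only point worth stating explicitly is that the non-strict bound $\alpha\ge\beta$ is not something to be reproved but is part of the very notion of Hippasus successor, so the proof reduces cleanly to eliminating equality.
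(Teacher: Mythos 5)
Your proof is correct and follows exactly the paper's own argument: substitute $\alpha=\beta$ into the defining equation, observe the result is $\beta^2$, which cannot equal $\pm1$ when $\beta>1$, and combine with the bound $\alpha\ge\beta$ from Definition \ref{Definition}. You merely spell out the computation $\beta(\beta+\beta)-\beta^2=\beta^2$ that the paper leaves implicit.
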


\begin{proof}
For $\beta >1$, the equality
$\beta (\beta +\beta )-(\beta )^2=(-1)^{\gamma}$
is impossible for each integer $\gamma$.
So if $\alpha $ exist we must have
$\alpha >\beta$.
\end{proof}

\begin{lemma}\label{Hippasus5}
{\it A Hippasus number
greater than $1$
has a unique
Hippasus successor.}
\end{lemma}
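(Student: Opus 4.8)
The plan is to study the integer-valued quadratic $f(\alpha)=\beta(\beta+\alpha)-\alpha^{2}=-\alpha^{2}+\beta\alpha+\beta^{2}$ on the range of admissible successors and to show that it descends too steeply there to hit both $+1$ and $-1$, or either value twice. First I would note that $f$ is a downward parabola with vertex at $\alpha=\beta/2$. Since $\beta>1$ forces $\beta\ge 2$, every candidate $\alpha\ge\beta$ satisfies $\alpha>\beta/2$, so $f$ is strictly decreasing on this range. By Lemma \ref{Hippasus4} a Hippasus successor of $\beta>1$ in fact obeys $\alpha>\beta$, but the weaker bound $\alpha\ge\beta$ is all the monotonicity argument needs.

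The key step is a first-difference estimate. A direct computation gives $f(\alpha)-f(\alpha+1)=2\alpha+1-\beta$, and for $\alpha\ge\beta\ge 2$ this is at least $\beta+1\ge 3$. Thus, as $\alpha$ runs through the integers $\ge\beta$, the values $f(\alpha)$ decrease strictly in steps of size at least $3$. This single inequality is really the whole content of the lemma, and it is exactly where the hypothesis $\beta>1$ is used: for $\beta=1$ the minimal step would be $2$, which is precisely why $1$ admits the two successors $1$ and $2$ furnished by Lemmas \ref{Hippasus1} and \ref{Hippasus2}.

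From here uniqueness follows formally. Suppose $\alpha_{1}<\alpha_{2}$ were two distinct Hippasus successors, so that both $f(\alpha_{1})$ and $f(\alpha_{2})$ lie in $\{+1,-1\}$. Strict monotonicity gives $f(\alpha_{1})>f(\alpha_{2})$, forcing $f(\alpha_{1})=+1$ and $f(\alpha_{2})=-1$; but $\alpha_{2}\ge\alpha_{1}+1$ together with the step estimate yields $f(\alpha_{1})-f(\alpha_{2})\ge 3$, contradicting $f(\alpha_{1})-f(\alpha_{2})=2$. Hence at most one successor exists, and since $\beta$ is a Hippasus number at least one exists by definition, giving the asserted uniqueness.

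I do not expect a serious obstacle here: the monotonicity and the step bound are routine, and the estimate is parity-free, so no odd--even case analysis is required. The only point deserving explicit care is the book-keeping at the boundary of the range, namely confirming that $\alpha\ge\beta$ (hence $\alpha>\beta/2$) really does place every candidate to the right of the vertex; once that is recorded, the proof reduces entirely to the inequality $2\alpha+1-\beta\ge 3$.
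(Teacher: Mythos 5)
Your proof is correct and rests on the same idea as the paper's: both arguments bound from below the drop of the quadratic $\beta(\beta+\alpha)-\alpha^{2}$ between two candidate successors and show it exceeds $2$, the maximum possible gap between two values in $\{+1,-1\}$ (the paper writes $\alpha'=\alpha+\delta$ and gets a drop of at least $5$ in one step, you telescope unit steps of size at least $3$). The only cosmetic difference is that the paper invokes Lemma \ref{Hippasus4} to get $\alpha>\beta$ before estimating, while you observe that $\alpha\ge\beta$ from the definition already suffices.
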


\begin{proof}
Let $\beta >1$ a Hippasus number
and $\alpha$ and $\alpha '$,
$\alpha \not= \alpha '$,
both Hippasus successors of $\beta$.
By the previous Lemma,
we have $\alpha >\beta$
and $\alpha '>\beta$.

Without loss of generality, suppose
$\alpha < \alpha '$.
There exists $\delta >0$
and
$\gamma ,\gamma '$ non negative integers such that
$\alpha '=\alpha + \delta$,
$\beta (\beta +\alpha )-\alpha ^2=(-1)^{\gamma }$
and
$\beta (\beta +\alpha +\delta)-(\alpha +\delta)^2=
(-1)^{\gamma '}$.
Now,

\centerline{$\beta (\beta +\alpha +\delta)-(\alpha +\delta)^2=$}

\centerline{$\beta (\beta +\alpha )+\beta\delta -(\alpha ^2+2\alpha\delta +\delta^2)=$}

\centerline{$(\beta (\beta +\alpha )
-\alpha ^2)+(\beta\delta -2\alpha\delta -\delta^2)=$}

\centerline{$(-1)^{\gamma }-\delta(-\beta +2\alpha +\delta )=$}

\centerline{$(-1)^{\gamma }-\delta((\alpha-\beta )+\alpha +\delta )$.}

Being
$\alpha \ge 3$ (as $\alpha >\beta \ge 2$),
$\alpha -\beta \ge 1$ (as $\alpha >\beta$)
and
$\delta \ge 1$ (as $\alpha '>\alpha$),
we have
$(\alpha-\beta )+\alpha +\delta \ge 5$
and
$- \delta ((\alpha-\beta )+\alpha +\delta )\le -5$.
So

\centerline{$\beta (\beta +\alpha +\delta)-(\alpha +\delta)^2
= (-1)^{\gamma }-5<(-1)^{\gamma '}$}

\noindent and $\alpha '=\alpha +\delta $ cannot
be a Hippasus successor of
$\beta$. Contradiction.
Then two different integers $\alpha , \alpha '$
cannot be both Hippasus successors of the same $\beta$.
\end{proof}

\bigskip

So, with the exception of 1
(that is, in a sense, {\it ambiguous})
any other Hippasus number $\beta$
has a {\it unique} Hippasus successor $\alpha$
that is strictly greater than $\beta$.

Now, we can precise Proposition \ref{FibonacciisHippasus1}

\begin{proposition}\label{FibonacciisHippasus2}
For the Fibonacci numbers
the following statements hold:

i) $F_{0}=1$ is an Hippasus number and
$F_{1}=1$ is an Hippasus successor of it,

ii) $F_{1}=1$ is an Hippasus number and
$F_{2}=2$ is an Hippasus successor of it,

iii) for each $i>1$, $F_{i}$ is an Hippasus number and
$F_{i+1}$ is its {\bf unique} Hippasus successor.
\end{proposition}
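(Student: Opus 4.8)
The plan is to establish each of the three claims i), ii), iii) by directly invoking the Cassini identity together with the uniqueness result already proved in Lemma \ref{Hippasus5}. The first two parts are immediate verifications. For part i), I would note that $F_0 = F_1 = 1$, so the claim that $F_1 = 1$ is a Hippasus successor of $F_0 = 1$ is exactly the content of Lemma \ref{Hippasus1}, which exhibits the equality $1(1+1)-1^2 = 1 = (-1)^0$. For part ii), since $F_1 = 1$ and $F_2 = 2$, the assertion reduces to Lemma \ref{Hippasus2}, namely $1(1+2)-2^2 = -1 = (-1)^1$. Thus both i) and ii) are settled by quoting the base-case lemmas already in hand.

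The substance of the proposition lies in part iii). For each $i > 1$, I would first show that $F_i$ is a Hippasus number with $F_{i+1}$ as a Hippasus successor. This is precisely the reformulated Cassini identity: by Lemma \ref{Cassini identity} we have $F_i F_{i+2} - F_{i+1}^2 = (-1)^i$, and substituting $F_{i+2} = F_i + F_{i+1}$ gives $F_i(F_i + F_{i+1}) - F_{i+1}^2 = (-1)^i$. Setting $\beta = F_i$, $\alpha = F_{i+1}$, and $\gamma = i$ in Definition \ref{Definition}, and observing that $\alpha = F_{i+1} > F_i = \beta$ holds for $i > 1$ (since the Fibonacci sequence is strictly increasing from that point), this confirms that $F_i$ is a Hippasus number with $F_{i+1}$ a legitimate Hippasus successor. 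So existence follows directly from Proposition \ref{FibonacciisHippasus1} restricted to $i > 1$.

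It remains to prove the \emph{uniqueness} asserted by the boldface claim. Here I would appeal to Lemma \ref{Hippasus5}: for $i > 1$ we have $F_i \ge F_2 = 2 > 1$, so $F_i$ is a Hippasus number strictly greater than $1$, and Lemma \ref{Hippasus5} guarantees that such a number has a \emph{unique} Hippasus successor. Since we have already exhibited $F_{i+1}$ as one such successor, it must be \emph{the} successor. This completes iii).

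I expect the only genuine subtlety to be bookkeeping about the range of $i$: the proposition deliberately separates the two ambiguous base cases ($F_0$ and $F_1$, both equal to $1$, which by Lemmas \ref{Hippasus1}–\ref{Hippasus3} admit the two distinct successors $1$ and $2$) from the generic case $i > 1$, where $F_i \ge 2$ and the uniqueness machinery of Lemma \ref{Hippasus5} applies cleanly. The main conceptual point — that Cassini supplies existence while Lemma \ref{Hippasus5} supplies uniqueness — is straightforward; no hard estimate or new computation is needed beyond what the earlier lemmas already provide.
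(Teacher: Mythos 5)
Your proposal is correct and follows exactly the paper's own argument: parts i) and ii) are settled by Lemmas \ref{Hippasus1} and \ref{Hippasus2}, and part iii) combines existence from Proposition \ref{FibonacciisHippasus1} (the Cassini identity) with uniqueness from Lemma \ref{Hippasus5}, using $F_i \ge 2$ for $i > 1$. Your write-up simply spells out in more detail the same steps the paper compresses into one sentence.
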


\begin{proof}
i) follows by Lemma \ref{Hippasus1}, 
ii) follows by Lemma \ref{Hippasus2}
and finally, as for $i>1$ we have $F_{i}\ge 2$, 
iii) follows by Proposition
\ref{FibonacciisHippasus1}
and
\ref{Hippasus5}.
\end{proof}

\begin{lemma}\label{Hippasus6}
{\it Let $\beta$ be a Hippasus number
and $\alpha$ be a Hippasus successor of $\beta$.
Then $\alpha -\beta\le \beta$.
}
\end{lemma}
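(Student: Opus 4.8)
The plan is to argue by contradiction, exploiting the fact that the quantity $g(\alpha)=\beta(\beta+\alpha)-\alpha^2$ is strictly decreasing in $\alpha$ once $\alpha\ge\beta$, so that pinning its value to $\pm 1$ prevents $\alpha$ from growing past $2\beta$. Since $\beta$ is a Hippasus number with Hippasus successor $\alpha$, Definition \ref{Definition} gives $\beta(\beta+\alpha)-\alpha^2=(-1)^{\gamma}$ for some non-negative integer $\gamma$; in particular this quantity equals $+1$ or $-1$, and in any case is $\ge -1$.

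First I would assume, for contradiction, that $\alpha-\beta>\beta$. Because $\alpha$ and $\beta$ are integers, this means $\alpha\ge 2\beta+1$, and hence also $\alpha-\beta\ge\beta+1$. The key algebraic step is the rewriting $\beta(\beta+\alpha)-\alpha^2=\beta^2-\alpha(\alpha-\beta)$, which isolates the large negative contribution arising from $\alpha$ being well above $\beta$.

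Next I would substitute the two lower bounds $\alpha\ge 2\beta+1$ and $\alpha-\beta\ge\beta+1$ to obtain $\alpha(\alpha-\beta)\ge(2\beta+1)(\beta+1)=2\beta^2+3\beta+1$, so that $\beta(\beta+\alpha)-\alpha^2\le\beta^2-(2\beta^2+3\beta+1)=-\beta^2-3\beta-1$. For every $\beta\ge 1$ the right-hand side is at most $-5$, exactly in the spirit of the estimates used in Lemmas \ref{Hippasus3} and \ref{Hippasus5}. This contradicts $\beta(\beta+\alpha)-\alpha^2=(-1)^{\gamma}\ge -1$, and the contradiction forces $\alpha-\beta\le\beta$.

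I do not expect any serious obstacle, since the argument is just a short monotonicity-and-bounding estimate; the only point requiring a little care is to make the bound strict enough (reaching $\le -5$ rather than merely $<0$) so that it rules out both admissible values $+1$ and $-1$ of $(-1)^{\gamma}$ at once. It is also worth noting that the boundary case $\alpha-\beta=\beta$ is attained only for $\beta=1$, $\alpha=2$, consistent with the \emph{ambiguous} role of the number $1$ remarked upon after Lemma \ref{Hippasus5}.
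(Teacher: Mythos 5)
Your proof is correct and follows essentially the same route as the paper's: the same rewriting $\beta(\beta+\alpha)-\alpha^2=\beta^2-\alpha(\alpha-\beta)$, followed by a lower bound on $\alpha(\alpha-\beta)$ under the assumption $\alpha-\beta>\beta$ that forces the left-hand side below $-1$, contradicting $(-1)^{\gamma}$. The only (harmless) difference is that you exploit integrality ($\alpha\ge 2\beta+1$) to get the uniform bound $\le-5$ for all $\beta\ge 1$, whereas the paper first disposes of the $\beta=1$ cases by direct verification and then bounds by $-\beta^{2}\le-4$ for $\beta\ge 2$.
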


\begin{proof}
It is a trivial verification if $\beta =1$ and $\alpha =1$
and
if $\beta =1$ and $\alpha =2$.
So, let $\beta >1$.
We know that,
for some $\gamma \ge 0$,
$\beta (\beta +\alpha )-\alpha ^2=(-1)^{\gamma}$.
By way of contradiction, suppose $\alpha -\beta > \beta$.
We have
$\beta (\beta +\alpha )-\alpha ^2=
(-1)[(\alpha -\beta )\alpha -\beta ^2]=$
$(-1)[(\alpha -\beta )((\alpha -\beta )+\beta) -\beta ^2]<
(-1)[\beta (\beta +\beta) -\beta ^2]=
-\beta ^2 \le -4$.
Contradiction. So, in any case, we have $\alpha -\beta\le \beta$.
\end{proof}

In some sense $0$ is a ``Hippasus number''
having 1 as one of its Hippasus successors
(indeed we have $0(0+1)-1=-1$)
but by our choice, a Hippasus number
must be positive, see Definition \ref{Definition}.
For this reason in the next lemma
we add the condition $\alpha >\beta$
with which we exclude
the case $\beta =1$ and $\alpha =1$.

\begin{lemma}\label{Hippasus7}
Let $\beta$ be a Hippasus number
and $\alpha$ be a Hippasus successor of $\beta$
with $\alpha >\beta$.
Then $\alpha -\beta$ is a Hippasus number
and
$\beta$ is a Hippasus successor of $\alpha -\beta$.
\end{lemma}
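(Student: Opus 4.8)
The plan is to verify directly that the candidate pair obtained by ``subtracting and swapping''—namely $\beta' = \alpha - \beta$ together with $\alpha' = \beta$—fulfils every requirement of Definition \ref{Definition}. First I would record the hypotheses: since $\alpha$ is a Hippasus successor of $\beta$, there is a non-negative integer $\gamma$ with $\beta(\beta+\alpha)-\alpha^2=(-1)^{\gamma}$, and since $\alpha>\beta$ we have $\beta'=\alpha-\beta\ge 1$, so $\beta'$ is a legitimate candidate for a Hippasus number. Note also $\beta'+\alpha'=(\alpha-\beta)+\beta=\alpha$, which keeps the bookkeeping short.

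The heart of the argument is a single algebraic identity, essentially the one already used in the proof of Lemma \ref{Hippasus6}. Expanding gives $\beta'(\beta'+\alpha')-\alpha'^2=(\alpha-\beta)\alpha-\beta^2=\alpha^2-\alpha\beta-\beta^2$, whereas the given expression is $\beta(\beta+\alpha)-\alpha^2=\beta^2+\alpha\beta-\alpha^2$. These two quantities are negatives of one another, so $\beta'(\beta'+\alpha')-\alpha'^2=-(-1)^{\gamma}=(-1)^{\gamma+1}$. Hence the defining equality holds for $\beta'$ and $\alpha'$ with exponent $\gamma'=\gamma+1$, and the required value $(-1)^{\gamma'}$ is indeed attained.

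It then remains to check the one inequality that Definition \ref{Definition} demands of a successor, namely $\alpha'\ge\beta'$, that is $\beta\ge\alpha-\beta$. This is exactly the conclusion of Lemma \ref{Hippasus6}, which asserts $\alpha-\beta\le\beta$ for any Hippasus number $\beta$ with successor $\alpha$. With that bound in hand, all conditions of the definition are satisfied, so $\beta'=\alpha-\beta$ is a Hippasus number having $\alpha'=\beta$ as a Hippasus successor. I expect no serious obstacle: the identity is a two-line computation and the only nontrivial ingredient has already been established. The sole point needing care is not to overlook the condition $\alpha'\ge\beta'$, which is precisely what prevents the new number $\alpha-\beta$ from exceeding its candidate successor $\beta$ and is the reason Lemma \ref{Hippasus6} must be invoked rather than the algebra alone.
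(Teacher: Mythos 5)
Your proposal is correct and follows essentially the same route as the paper: the same algebraic identity showing that $(\alpha-\beta)((\alpha-\beta)+\beta)-\beta^2$ is the negative of $\beta(\beta+\alpha)-\alpha^2$, hence equals $(-1)^{\gamma+1}$, together with Lemma \ref{Hippasus6} to secure $0<\alpha-\beta\le\beta$. Your explicit attention to the condition $\alpha'\ge\beta'$ is exactly the role Lemma \ref{Hippasus6} plays in the paper's argument.
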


\begin{proof}
By Lemma \ref{Hippasus6}
we have $0<\alpha -\beta \le \beta$.
Moreover, we know that for some
$\gamma$ we have
$\beta (\beta +\alpha )-\alpha ^2=(-1)^{\gamma}$.
So,
$(\alpha -\beta )((\alpha -\beta )+\beta) -\beta ^2=
(\alpha -\beta )\alpha -\beta ^2=
(-1)[\beta (\beta +\alpha )-\alpha ^2]=(-1)^{\gamma +1}$
that exactly says that
$\alpha -\beta$ is a Hippasus number
and
$\beta$ is a Hippasus successor of $\alpha -\beta$.
\end{proof}

\begin{lemma}\label{Hippasus8}
{\it Let $\beta \ge 1$ be a Hippasus number
and $\alpha$ a Hippasus successor of $\beta$.
If $\alpha -\beta =\beta $ then
$\alpha -\beta =1$,
$\beta =1$
and
$\alpha=2$.
}
\end{lemma}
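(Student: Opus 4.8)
The plan is to prove Lemma 8 by substituting the hypothesis $\alpha - \beta = \beta$ (equivalently $\alpha = 2\beta$) directly into the defining equation of a Hippasus number and showing that this forces $\beta = 1$. Since $\beta \ge 1$ is a Hippasus number with successor $\alpha$, there is some $\gamma \ge 0$ with $\beta(\beta+\alpha) - \alpha^2 = (-1)^\gamma$. The strategy is to compute this expression under the assumption $\alpha = 2\beta$ and obtain a clean closed form in $\beta$.

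First I would perform the substitution. With $\alpha = 2\beta$ we have $\beta + \alpha = 3\beta$, so $\beta(\beta+\alpha) = 3\beta^2$, and $\alpha^2 = 4\beta^2$. Therefore
\begin{equation*}
\beta(\beta+\alpha) - \alpha^2 = 3\beta^2 - 4\beta^2 = -\beta^2.
\end{equation*}
The Hippasus condition then requires $-\beta^2 = (-1)^\gamma$, i.e. $\beta^2 = \pm 1$. Since $\beta$ is a positive integer, $\beta^2 \ge 1$ with equality only when $\beta = 1$, and $\beta^2$ is never $-1$. Hence the only possibility is $\beta^2 = 1$, forcing $\beta = 1$ and $\gamma$ odd. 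From $\alpha = 2\beta$ we then read off $\alpha = 2$, and $\alpha - \beta = 1$, which is exactly the claimed conclusion.

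The main (and essentially only) obstacle is confirming that the boundary case is consistent rather than vacuous: one must check that $\beta = 1$, $\alpha = 2$ genuinely is a Hippasus number with its successor, so that the conclusion is not empty. This is already guaranteed by Lemma \ref{Hippasus2}, where the identity $1(1+2) - 2^2 = -1$ was verified. I would note that this also matches the sign bookkeeping above, since $-\beta^2 = -1 = (-1)^\gamma$ with $\gamma$ odd. No case analysis on $\gamma$ beyond the parity observation is needed, because the left-hand side is the fixed negative quantity $-\beta^2$ regardless of $\gamma$; the entire force of the argument comes from the arithmetic fact that $-\beta^2$ can equal $\pm 1$ only at $\beta = 1$. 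Thus the lemma reduces to a one-line substitution followed by the elementary observation that $1$ is the unique positive integer whose square is $1$.
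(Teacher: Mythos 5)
Your proof is correct, and it takes a genuinely more direct route than the paper's. The paper splits into three cases ($\beta=1,\alpha=1$; $\beta=1,\alpha=2$; $\beta>1$) and, in the case $\beta>1$, invokes Lemma \ref{Hippasus7} to pass to the \emph{reversed} relation $(\alpha-\beta)((\alpha-\beta)+\beta)-\beta^2=(-1)^{\gamma}$, which under $\alpha-\beta=\beta$ collapses to $\pm\beta^2=(-1)^{\gamma}$ and yields the contradiction. (Incidentally, the paper writes this as $-\beta^2=(-1)^{\gamma}$, whereas the substitution actually gives $(\alpha-\beta)\alpha-\beta^2=2\beta^2-\beta^2=+\beta^2$; the sign slip is harmless since either way $\beta>1$ is contradicted.) You instead substitute $\alpha=2\beta$ directly into the defining identity $\beta(\beta+\alpha)-\alpha^2=(-1)^{\gamma}$ and get $-\beta^2=(-1)^{\gamma}$ cleanly, with no appeal to Lemma \ref{Hippasus7} and no case analysis: $\beta=1$ and $\alpha=2$ drop out at once, and the case $\alpha=1$ never needs separate exclusion because $\alpha=2\beta$ already forces $\alpha=2$. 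Your version is shorter, self-contained, and avoids the sign ambiguity; the paper's version has the mild virtue of reusing the structural lemma that drives the main descent argument, but nothing in the later proofs depends on that choice.
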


\begin{proof}
Consider three cases:

\centerline{a) $\beta =1$, $\alpha =1$;
\,\,\,\,
b) $\beta =1$, $\alpha =2$
\,\,\,\,
and
\,\,\,\,
c) $\beta >1$.}

We have $\alpha =2\beta$.
Case a): $\alpha =2\beta$ is not true.
Case c):
by Lemma \ref{Hippasus7},
for some $\gamma$
the equality
$(\alpha -\beta )(\alpha -\beta +\beta )-\beta ^2=(-1)^{\gamma}$
must hold,
i.e.,
$-\beta ^2=(-1)^{\gamma}$
must hold and
this contradicts $\beta >1$.
So, it remains only case b)
in which the statement trivially holds.
\end{proof}

Now, we are ready to prove the following proposition
of which we present two proofs.

\begin{proposition}\label{HippasusisFibonacci}
Any Hippasus number is a Fibonacci number.
\end{proposition}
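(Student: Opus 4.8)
The plan is to prove that every Hippasus number is a Fibonacci number by a descent argument, using the Pythagorean Proposition (Proposition~\ref{Pythagorean Proposition}) as the engine that guarantees termination. The key structural facts are already in place: by Lemma~\ref{Hippasus7}, if $\beta$ is a Hippasus number with Hippasus successor $\alpha > \beta$, then $\alpha - \beta$ is again a Hippasus number having $\beta$ as its Hippasus successor. This transforms a pair $(\beta, \alpha)$ into a strictly smaller pair $(\alpha - \beta, \beta)$, since $0 < \alpha - \beta \le \beta < \alpha$ by Lemma~\ref{Hippasus6}. The strategy is therefore to iterate this ``subtract the smaller from the larger'' step and argue that the sequence of numbers produced must reach the base case.

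First I would handle the ambiguous base cases directly: if $\beta = 1$, then by Lemmas~\ref{Hippasus1}, \ref{Hippasus2}, and~\ref{Hippasus3}, its only Hippasus successors are $1 = F_1$ and $2 = F_2$, and $1 = F_0 = F_1$ is itself a Fibonacci number, so the claim holds immediately. So I would assume $\beta > 1$, which by Lemma~\ref{Hippasus5} has a \emph{unique} Hippasus successor $\alpha$, and by Lemma~\ref{Hippasus4} we have $\alpha > \beta$. Now I would set up the descent: starting from $(\beta_0, \alpha_0) = (\beta, \alpha)$, repeatedly apply Lemma~\ref{Hippasus7} to define $(\beta_{k+1}, \alpha_{k+1}) = (\alpha_k - \beta_k, \beta_k)$, so long as $\alpha_k > \beta_k$. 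Each $\beta_k$ is a positive Hippasus number, and the sequence $\beta_0 > \beta_1 > \cdots$ is strictly decreasing in positive integers, so by the Pythagorean Proposition~\ref{Pythagorean Proposition} it must terminate. Termination occurs precisely when the uniqueness/strict-inequality machinery breaks, i.e.\ when we reach a pair with $\alpha_k = \beta_k$; by Lemma~\ref{Hippasus8}, this forces $\beta_k = 1$ and $\alpha_k = 2$, the ambiguous base.

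Having reached the base pair $(1, 2)$, I would then reconstruct $(\beta, \alpha)$ by reversing the descent. The reverse of the step $(\beta_{k+1}, \alpha_{k+1}) = (\alpha_k - \beta_k, \beta_k)$ is $(\beta_k, \alpha_k) = (\alpha_{k+1}, \beta_{k+1} + \alpha_{k+1})$: that is, each pair is obtained from its successor by the rule that the new larger entry is the sum of the two previous entries. But this is exactly the Fibonacci recurrence. Starting from $(1, 2) = (F_1, F_2)$ and building upward, every pair $(\beta_k, \alpha_k)$ produced along the way is a pair of consecutive Fibonacci numbers $(F_{m}, F_{m+1})$ for some $m$. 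In particular the original $\beta = \beta_0$ is a Fibonacci number, which is what we wanted.

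The main obstacle, and the place where the argument must be made airtight, is ensuring that the descent is always applicable and always strictly decreasing. The subtlety is that Lemma~\ref{Hippasus7} requires the strict inequality $\alpha_k > \beta_k$ to produce the next Hippasus pair, and one must verify that this inequality holds at every step before the terminal one; Lemma~\ref{Hippasus4} gives it whenever $\beta_k > 1$, and the only way to have $\alpha_k = \beta_k$ is the base case isolated by Lemma~\ref{Hippasus8}. Thus the descent cannot stall prematurely, cannot cycle, and cannot produce a non-Fibonacci pair, because each downward step is forced and each upward reconstruction step is the Fibonacci addition. The Pythagorean Proposition is what closes the loop, guaranteeing we actually arrive at the base in finitely many steps rather than descending forever.
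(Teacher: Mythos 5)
Your proposal is correct and follows essentially the same route as the paper's first proof: the descent $(\beta,\alpha)\mapsto(\alpha-\beta,\beta)$ via Lemma~\ref{Hippasus7}, termination guaranteed by the Pythagorean Proposition~\ref{Pythagorean Proposition}, identification of the base case via Lemma~\ref{Hippasus8}, and upward reconstruction showing each pair consists of consecutive Fibonacci numbers. The only cosmetic difference is that your reconstruction step appeals directly to the arithmetic reversal of the subtraction, whereas the paper invokes the uniqueness of the Hippasus successor (Lemma~\ref{Hippasus5} and Proposition~\ref{FibonacciisHippasus2}) at each upward step; both close the argument the same way.
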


\begin{proof}
Let $\beta$ be a Hippasus number
and let $\alpha$ be a Hippasus successor of it.
If $\beta =1$ and $\alpha =1$
then $\beta$ is a Fibonacci number.
If $\beta =1$ and $\alpha =2$
then $\beta$ is a Fibonacci number too.
(The set of Hippasus numbers
contain two times the value $1$,
see Lemma \ref{Hippasus1} and \ref{Hippasus2},
as well as the sequence of Fibonacci numbers.)

So, we have to prove that
a Hippasus number greater than $1$
is a Fibonacci number.
Let $\beta$ be such a number.
We know, by Definition \ref{Definition}, that $\beta$
has a Hippasus successor $\alpha$
and, being $\beta >1$, we also know
that $\alpha >\beta$
(by Lemma \ref{Hippasus4})
and that $\alpha$ is unique
(by Lemma \ref{Hippasus5}).

We know, by Lemma \ref{Hippasus7},
that $\alpha -\beta$ is a Hippasus number
and that $\beta $ is a Hippasus successor of $\alpha -\beta$.
By Lemma \ref{Hippasus6}
we have that
$\alpha -\beta \le \beta $,
i.e.,
there are two possibilities

\centerline{
$\alpha -\beta = \beta$
\,\,\,\,\,\,and\,\,\,\,\,\,
$\alpha -\beta < \beta$.}

If $\alpha -\beta =\beta$,
then by Lemma \ref{Hippasus8},
$\beta =1$. Contradiction.

So we must have
$\alpha -\beta < \beta$.
Put $\beta =\beta _1$ and $\alpha -\beta =\beta _2$.

It may
happen that
$\beta _1 -\beta _2 < \beta  _2$.
Put $\beta  _3=\beta _1 -\beta _2$.

It may similarly happen that
$\beta _2 -\beta _3 < \beta  _3$.
Put $\beta  _4=\beta _2 -\beta _3$.

And so on indefinitely.

In principle, we thus have two possibilities:

-either, for each positive integer $k$,
after the selection
of the integer $\beta _{k}$
we select $\beta _{k+1}$
with $\beta _{k+1}<\beta _{k}$;

-either the process of selection
of $\beta _{k+1}$ strictly smaller of $\beta _{k}$
will fail at a certain stage.

Let us take these two possibilities in turn
\footnote{
Here we try to imitate
a clear, elegant and powerful
model of exposition
that
{\it Ramsey}
presented in
\cite{Ramsey1930}.}.

By Pythagorean Proposition
\ref{Pythagorean Proposition}
({\it an infinite strictly decreasing sequence of positive
integers cannot exist})
the first possibility cannot happen.
So, the process of selection
of $\beta _{k+1}$ strictly smaller of $\beta _{k}$
will fail at a certain stage when,
for a given integer, say $i$,
$\beta _{i+1}=\beta _{i}$.

So, we suppose that we have selected
$\beta _{1}$,
$\beta _{2}$,
$\dots$,
$\beta _{i-2}$,
$\beta _{i-1}$,
$\beta _{i}$,
$\beta _{i+1}$
with 
$\alpha -\beta =\alpha -\beta _{1}<\beta _{1}$,
$\beta _{1}-\beta _{2}=\beta _{3}<\beta _{2}$,
$\beta _{2}-\beta _{3}=\beta _{4}<\beta _{3}$,
$\dots$,
$\beta _{i-2}-\beta _{i-1}=\beta _{i}<\beta _{i-1}$
and
$\beta _{i-1}-\beta _{i}=\beta _{i+1}=\beta _{i}$.

By hypothesis
$\beta =\beta _{1}$ is a Hippasus number
and 
$\beta _{2}$,
$\dots$,
$\beta _{i-2}$,
$\beta _{i-1}$,
$\beta _{i}$,
$\beta _{i+1}$
are all Hippasus numbers by Lemma \ref{Hippasus7}.
Moreover,
again by Lemma \ref{Hippasus7},
$\beta _{i}$ is a successor of $\beta _{i+1}$,
$\beta _{i-1}$ is a successor of $\beta _{i}$,
$\dots$,
$\beta _{1}$ is a successor of $\beta _{2}$,
$\alpha$ is a successor of $\beta =\beta _{1}$.

Considering $\beta _{i-1}-\beta _{i}=\beta _{i+1}=\beta _{i}$,
by Lemma \ref{Hippasus8}, we have:

\centerline{$\beta _{i+1}=1=F_{0}$,}

\centerline{$\beta _{i}=1=F_{1}$,}

\centerline{$\beta _{i-1}=2=F_{2}$.}

By construction $\beta _{i-1}=2=F_{2}$
has a unique Hippasus successor that is $\beta _{i-2}$
but, as the Fibonacci number $F_{2}$ has a unique
Hippasus successor that is $F_{3}$ (see Lemma \ref{FibonacciisHippasus2}),
we have that

\centerline{$\beta _{i-2}=3=F_{3}$.}

Similarly,

\centerline{$\beta _{i-3}=5=F_{4}$,}

\centerline{$\beta _{i-4}=8=F_{5}$,}

\centerline{$\dots \dots \dots $,}

\noindent and, continuing in this way,

\centerline{$\dots \dots \dots $,}

\centerline{$\beta _{3}=F_{i-2}$,}

\centerline{$\beta _{2}=F_{i-1}$,}

\centerline{$\beta _{1}=F_{i}$.}

\end{proof}

A second proof could be the following.
By way of contradiction,
suppose that the set of Hippasus numbers
which are not Fibonacci numbers is non empty.
By the {\it minimum principle} this set admits
a minimum element, say $\beta$.
Necessarily, $\beta$ is strictly greater than $2$
and has a unique
Hippasus successor, say $\alpha $.
Consider $\alpha - \beta$
that, by Lemma \ref{Hippasus7},
is a Hippasus number.
If $\alpha - \beta =\beta$
then, by Lemma \ref{Hippasus8},
$\beta =1$ that is a Fibonacci number.
Contradiction.
If $\alpha - \beta <\beta$
then, by Lemma \ref{Hippasus7},
$\alpha - \beta$
is a Hippasus number and strictly smaller than $\beta$.
Contradiction too.

The second proof,
that uses the {\it minimum principle},
is shorter than the first one,
which we prefer as it uses explicitly
the {\it Pythagorean Proposition}
\ref{Pythagorean Proposition}.

Proposition \ref{FibonacciisHippasus1}
and
Proposition \ref{HippasusisFibonacci}
imply the following

\begin{proposition}\label{Proposition3}
A positive integer is a Hippasus number if, and only if, it is a Fibonacci number.
\end{proposition}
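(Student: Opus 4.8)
The plan is to prove the biconditional by splitting it into its two implications, each of which has already been established separately earlier in the paper; no new argument is required, only the assembly of the two halves. Let $\beta$ be a positive integer.

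First I would handle the direction ``Fibonacci $\Rightarrow$ Hippasus''. Suppose $\beta$ is a Fibonacci number, say $\beta = F_i$ for some $i \ge 0$. Then Proposition \ref{FibonacciisHippasus1} applies directly and tells us that $F_i$ is a Hippasus number (indeed $F_{i+1}$ serves as a Hippasus successor of it). Hence $\beta$ is a Hippasus number, and this implication is complete.

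Next I would handle the converse direction ``Hippasus $\Rightarrow$ Fibonacci''. Suppose $\beta$ is a Hippasus number. Then Proposition \ref{HippasusisFibonacci}, which asserts precisely that any Hippasus number is a Fibonacci number, gives at once that $\beta$ is a Fibonacci number. Combining the two implications yields the desired equivalence.

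Since both directions are quoted verbatim from results already in hand, there is no genuine obstacle at this final step: all the real difficulty was absorbed into Proposition \ref{HippasusisFibonacci}, whose descent argument (built on the Pythagorean Proposition \ref{Pythagorean Proposition} together with Lemmas \ref{Hippasus6}, \ref{Hippasus7} and \ref{Hippasus8}) carries the entire weight of the characterization. The one point worth a remark is the borderline value $\beta = 1$, which is ``ambiguous'' in the sense discussed after Lemma \ref{Hippasus5} (it occurs twice, with successors $1$ and $2$); but this causes no trouble, because $1$ is itself a Fibonacci number ($F_0 = F_1 = 1$), so it lies on both sides of the equivalence. With that checked, the proposition follows immediately.
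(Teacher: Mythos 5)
Your proposal is correct and matches the paper exactly: the paper derives this proposition by simply combining Proposition \ref{FibonacciisHippasus1} (Fibonacci implies Hippasus) with Proposition \ref{HippasusisFibonacci} (Hippasus implies Fibonacci), which is precisely your two-implication assembly. Your extra remark about the ambiguous value $\beta=1$ is harmless and consistent with the paper's discussion.
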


By our previous results we are convinced that
the relations between the pythagorean
{\it equality}
$b(b+a)-a^2=0$
and the {\it Cassini Identity}
$\beta (\beta +\alpha )-\alpha ^2=(-1)^{\gamma}$
are really very strict.
At least in our thesis,
the School,
that discovered the first equality,
hardly could have ignored the second one.
In other terms, when the School found a Hippasus number then
the same School simultaneusly  found
a Fibonacci number,
because no other number could have be found.
In order to add another argument to our previous ones,
we prove the following proposition.

\begin{proposition}\label{alphabeta}
Let $\beta$ be a Hippasus number
and $\alpha$ be a Hippasus successor of $\beta$.
Then $\alpha$ is a Hippasus number
and
$\alpha + \beta$ is a Hippasus successor of $\alpha$.
\end{proposition}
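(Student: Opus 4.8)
The plan is to reduce the entire statement to a single algebraic identity, in exact parallel with the descending step carried out in Lemma \ref{Hippasus7}. By Definition \ref{Definition}, the hypothesis that $\beta$ is a Hippasus number with Hippasus successor $\alpha$ means precisely that $\alpha \ge \beta$ and that, for some non-negative integer $\gamma$,
\[
\beta(\beta+\alpha) - \alpha^2 = (-1)^{\gamma}.
\]
To prove the proposition I must supply, for the candidate pair $(\alpha,\,\alpha+\beta)$, the two ingredients the definition demands: the inequality $\alpha+\beta \ge \alpha$, and a non-negative integer $\gamma'$ satisfying $\alpha\bigl(\alpha+(\alpha+\beta)\bigr) - (\alpha+\beta)^2 = (-1)^{\gamma'}$.

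First I would dispose of the inequality. Since every Hippasus number is by definition a positive integer, we have $\beta \ge 1$, hence $\alpha+\beta \ge \alpha+1 > \alpha$, so $\alpha+\beta$ is a legitimate candidate for a Hippasus successor of $\alpha$.

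The heart of the matter is the computation of $\alpha\bigl(\alpha+(\alpha+\beta)\bigr) - (\alpha+\beta)^2$. Expanding and collecting terms I expect to reach $\alpha^2 - \alpha\beta - \beta^2$, which is visibly the negative of $\beta(\beta+\alpha) - \alpha^2 = \beta^2 + \alpha\beta - \alpha^2$. Hence
\[
\alpha\bigl(\alpha+(\alpha+\beta)\bigr) - (\alpha+\beta)^2 = -\bigl(\beta(\beta+\alpha)-\alpha^2\bigr) = -(-1)^{\gamma} = (-1)^{\gamma+1}.
\]
Taking $\gamma' = \gamma+1$, which is again a non-negative integer, the defining equality of Definition \ref{Definition} holds for the pair $(\alpha,\,\alpha+\beta)$, and the proposition follows.

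I do not anticipate a genuine obstacle here: this is a bare manipulation of the quadratic form $Q(x,y) = y(y+x) - x^2$, and the sign flip $(-1)^{\gamma} \mapsto (-1)^{\gamma+1}$ is exactly the companion of the one already appearing in Lemma \ref{Hippasus7}. The only points needing a moment of care are confirming that the new exponent stays non-negative (immediate, since $\gamma \ge 0$ forces $\gamma+1 \ge 1$) and that the side condition $\alpha+\beta \ge \alpha$ of the definition is met (immediate from $\beta \ge 1$). In effect, Lemma \ref{Hippasus7} realizes the descent $(\beta,\alpha) \mapsto (\alpha-\beta,\,\beta)$ while this proposition realizes the matching ascent $(\beta,\alpha) \mapsto (\alpha,\,\alpha+\beta)$, so that together they give the two directions of the Fibonacci recursion on Hippasus pairs.
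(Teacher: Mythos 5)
Your proposal is correct and follows the same route as the paper: both proofs reduce the claim to the single algebraic identity $\alpha\bigl(\alpha+(\alpha+\beta)\bigr)-(\alpha+\beta)^2=\alpha^2-\alpha\beta-\beta^2=-\bigl(\beta(\beta+\alpha)-\alpha^2\bigr)=(-1)^{\gamma+1}$. Your explicit check of the side condition $\alpha+\beta\ge\alpha$ is a small but welcome addition that the paper's proof leaves implicit.
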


\begin{proof}
For some $\gamma$ we have
$(-1)^{\gamma}=
\beta (\beta +\alpha )-\alpha ^2$.
So
$\alpha (\alpha +\alpha +\beta )-(\alpha +\beta)^2=
\alpha ^2 +\alpha (\alpha +\beta )-(\alpha +\beta)^2=
\alpha ^2 -\alpha \beta  -\beta^2=
(-1)(-\alpha ^2 +\alpha \beta  +\beta^2)=
(-1)(\beta (\beta +\alpha )-\alpha ^2)=(-1)^{\gamma +1}$
i.e.
$\alpha$ is a Hippasus number
and $\alpha +\beta$ is a Hippasus successor of it.
\end{proof}

\begin{corollary}\label{ab}
If $a$ is a Hippasus number and $b$
is its Hippasus successor
then $a+b$ is a Hippasus number.
\end{corollary}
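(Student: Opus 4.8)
The plan is to read off this corollary directly from Proposition \ref{alphabeta}, which already contains essentially all the content needed; the only task is to chain that proposition with itself once. The governing observation is that the first clause of Proposition \ref{alphabeta} asserts that a Hippasus successor of a Hippasus number is again a Hippasus number.

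First I would apply Proposition \ref{alphabeta} with the substitution $\beta \mapsto a$ and $\alpha \mapsto b$. Since $a$ is a Hippasus number and $b$ is one of its Hippasus successors, the proposition tells us simultaneously that $b$ is itself a Hippasus number and that $a+b$ is a Hippasus successor of $b$.

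At this stage $a+b$ is known only to be a Hippasus \emph{successor} of $b$, not yet a Hippasus number; the short remaining step is to upgrade this. I would therefore apply Proposition \ref{alphabeta} a second time, now with $\beta \mapsto b$ and $\alpha \mapsto a+b$, using the two facts just established (that $b$ is a Hippasus number and that $a+b$ is a Hippasus successor of it). The first clause of the proposition then yields immediately that $a+b$ is a Hippasus number, which is exactly the assertion of the corollary.

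I do not anticipate any real obstacle, since the statement is little more than a repackaging of Proposition \ref{alphabeta}; the only point requiring a moment's care is precisely that a single application produces a successor rather than a number, so that the second application is what closes the gap. If one preferred a self-contained verification, the candidate Hippasus successor of $a+b$ is $a+2b$, and a routine expansion gives $(a+b)\bigl((a+b)+(a+2b)\bigr)-(a+2b)^2 = a^2+ab-b^2 = a(a+b)-b^2$, which equals the sign $(-1)^{\gamma}$ coming from the hypothesis on $a$ and $b$; this confirms directly that $a+b$ is a Hippasus number.
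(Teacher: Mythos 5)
Your proposal is correct and matches the paper's intent: the paper states Corollary \ref{ab} without a separate proof, treating it as an immediate consequence of Proposition \ref{alphabeta}, which is exactly the chaining you carry out (and your explicit observation that one application yields only ``successor'' status, requiring a second application of the first clause, is a careful and accurate filling-in of that implicit step). Your supplementary direct computation $(a+b)\bigl((a+b)+(a+2b)\bigr)-(a+2b)^2=a(a+b)-b^2=(-1)^{\gamma}$ also checks out.
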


Corollary \ref{ab}
certifies that the laws of formation of Fibonacci numbers
and of Hippasus numbers are the same!
Much better, the Fibonacci law $F_n+F_{n+1}=F_{n+2}$
rediscovers the Pythagorean law given in the previous
Corollary \ref{ab}.
Moreover,
the Definition \ref{Definition}
of Hippasus numbers is
operational and allows us to find Hippasus
numbers one after the other.

The Wasteel result of next section
is just a criterion to decide if two integers
are consecutive Fibonacci numbers.

\section*{With Fibonacci numbers the surprises never end}

Dickson recalls in \cite{Dickson}
the following result of Wasteels, proved in \cite{wasteels}.

\begin{proposition}\label{Proposition4}
Two positive integers $x$ and $y$
for which $y^2 -xy -x^2$
equals $+1$ or $-1$
are consecutive terms of the series of Fibonacci.
\end{proposition}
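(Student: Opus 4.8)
The plan is to recognize the Wasteels form $y^2 - xy - x^2 = \pm 1$ as exactly the Cassini-type equation defining a Hippasus number, and then hand the problem over to the machinery already developed. The first step is the algebraic identity $x(x+y) - y^2 = -(y^2 - xy - x^2)$, which shows that the hypothesis $y^2 - xy - x^2 = \pm 1$ is the same as $x(x+y) - y^2 = \mp 1 = (-1)^{\gamma}$ for a suitable non-negative integer $\gamma$. Comparing with Definition \ref{Definition}, this says that $x$ is a Hippasus number and $y$ is a Hippasus successor of $x$ --- provided I can also verify the size requirement $y \geq x$ built into that definition.

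So the next step is to check $y \geq x$. If $y^2 - xy - x^2 = 1$, then $y^2 = xy + x^2 + 1 > xy$, and since $x, y$ are positive this forces $y > x$. If instead $y^2 - xy - x^2 = -1$, solving for $y$ gives $y = (x + \sqrt{5x^2 - 4})/2$, and since $5x^2 - 4 \geq x^2$ for every $x \geq 1$ (with equality only at $x = 1$), this yields $y > x$ when $x > 1$ and $y = x = 1$ when $x = 1$. In all cases $y \geq x$, so $x$ genuinely qualifies as a Hippasus number with Hippasus successor $y$.

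With this in hand, Proposition \ref{HippasusisFibonacci} gives at once that $x$ is a Fibonacci number, and it only remains to identify $y$ as the Fibonacci number immediately after $x$. I would split on the size of $x$: if $x > 1$, then $x = F_i$ with $i > 1$, and Proposition \ref{FibonacciisHippasus2}(iii) says that $F_{i+1}$ is the \emph{unique} Hippasus successor of $x$, whence $y = F_{i+1}$ and $x, y$ are consecutive; if $x = 1$, then by Lemmas \ref{Hippasus1}, \ref{Hippasus2} and \ref{Hippasus3} the only Hippasus successors of $1$ are $1$ and $2$, giving $(x, y) = (F_0, F_1) = (1,1)$ or $(x, y) = (F_1, F_2) = (1, 2)$, again consecutive. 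I expect the only delicate point to be the bookkeeping at $x = 1$, where $1$ serves as both $F_0$ and $F_1$ and carries two distinct successors; everywhere else the uniqueness in Lemma \ref{Hippasus5} makes the identification $y = F_{i+1}$ immediate, and the remaining verifications are routine sign computations.
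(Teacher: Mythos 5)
Your proof is correct, but it is worth noting that the paper itself offers \emph{no} proof of this proposition: it is stated as Wasteels' theorem, recalled from Dickson and proved in the 1902 paper cited as \cite{wasteels}, with the author merely remarking beforehand that ``the Wasteel result \ldots is just a criterion to decide if two integers are consecutive Fibonacci numbers.'' What you have done is make that remark precise by deriving the Wasteels criterion from the paper's own machinery. The reduction is clean: the identity $x(x+y)-y^2=-(y^2-xy-x^2)$ converts the hypothesis into the defining equation of Definition \ref{Definition}, your sign analysis correctly establishes the required inequality $y\ge x$ (with the only equality case $x=y=1$ handled explicitly), and then Proposition \ref{HippasusisFibonacci} identifies $x$ as a Fibonacci number while the uniqueness statements in Proposition \ref{FibonacciisHippasus2} and Lemmas \ref{Hippasus1}--\ref{Hippasus3} pin down $y$ as the very next one. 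This buys something the paper leaves implicit: it shows that Proposition \ref{Proposition4} is not an independent external fact but a corollary of Proposition \ref{Proposition3} together with the successor-uniqueness lemmas, which strengthens the paper's narrative that the Cassini/Hippasus characterization and the Wasteels criterion are one and the same result. The only point deserving a second look is your quadratic-formula step in the case $y^2-xy-x^2=-1$; it is fine as written, since the other root $(x-\sqrt{5x^2-4})/2$ is nonpositive for $x\ge1$ and so cannot be the positive integer $y$, but you should say so explicitly rather than silently selecting the positive branch.
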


Matiyasevich in
\cite{Matiyasevich1}
with reference to the result of Wasteels
says:
{\it The fact that successive Fibonacci numbers
give the solution of Eq. (25) was
presented by Jean-Dominique Cassini
to the Academie Royale des Sciences as
long ago as 1680. It can be proved
by a trivial induction. At the same time the
stronger fact that Eq. (25) is characteristic
of the Fibonacci numbers is somehow
not given in standard textbooks.
The induction required to prove the converse is
less obvious, and that fact seems
to be the reason for the inclusion of the problem
of inverting Cassini’s identity as Exercise 6.44 in Concrete Mathematics by Ronald Graham, Donald Knuth, and Oren Patashnik [13]. As the original source of this problem
the authors cite my paper [21], but I have always suspected
that such a simple and fundamental
fact must have been discovered long before
me. This suspicion turned out to be justified:
I have recently found a paper of
M.Wasteels [41] published in 1902
in the obscure journal Mathesis.
\footnote
{In this citation Eq. (25) is the Cassini identity
$F_{i}F_{i+2}-F_{i+1}^2=(-1)^{i}$.
Paper [13] corresponds to \cite{GKP} here,
paper [21] is the fundamental and historical paper
of Matiyasevich (here \cite{Matiyasevich0})
and paper of Wasteels [41] is \cite{wasteels} here.}
}

A {\it pentagon}
on a {\it portale} of ``{\it Duomo di Prato}''
refers to {\it Fibonacci numbers}
\footnote{Recently, the Fibonacci numbers
have been rediscovered in a tarsia
of the Church of San Nicola in Pisa
(see Armienti
\cite{Armienti}
and Albano
\cite{Albano}).
}
and a {\it octagon} on the same {\it portale}
seems to have a reference to a singular construction
of an octagon that uses Fibonacci numbers!
This octagon is {\it not}
regular but very impressively
similar to a regular octagon:
we design two concentric circles
having diameters $F_n$ and $F_ {n + 2}$,
the two horizontal straight line tangent to the inner circle
and
the two vertical straight line tangent to the same inner circle.
These four lines
cut the larger circle
into 8 points.
We denote by $P_n$ and $Q_n$
the two of them having
the following coordinates
and lying in the first quadrant:

\centerline{
$P_n=\Big( \frac{F_n}{2}
,
\sqrt {\Big(\frac{F_{n+2}}{2}\Big)^{2}-
\Big(\frac{F_n}{2}\Big)^{2} }\Big)$
\,
,
\,
$Q_n=\Big( \sqrt {\Big(\frac{F_{n+2}}{2}\Big)^{2}-
\Big(\frac{F_n}{2}\Big)^{2} }\Big),
\frac{F_n}{2}\Big)$
.}

They are the extremes
of one of the eigth sides of our octagon.
We note that their distance $d_n$ is
$\sqrt{2}
\Big[\sqrt {{\Big(\frac{F_{n+2}}{2}\Big)^{2}-
\Big(\frac{F_n}{2}\Big)^{2} }}-\frac{F_n}{2}\Big]$.
We also denote by $e_n$
the side of the regular octagon
inscribed in the circle of diameter $F_{n+2}$.
We have that:

-the value
$\frac{d_{n}}{F_{n}}$
tends to the limit
$
\frac{\sqrt{2}}{2}
\Big[\sqrt {\Phi ^{4}-1}-1\Big]$,
i.e. about 1.00375

-the value
$\frac{d_{n}}{e_{n}}$
tends to the limit
$
\frac{\sqrt{2}}{\sqrt{2-\sqrt{2}}}
\Big[\sqrt {1-\Phi ^{-4}}-\Phi ^{-2}\Big]
$, i.e. about 1.00187

-the value $\frac{e_{n}}{F_{n}}$
tends to the limit
$\frac{\sqrt{2-\sqrt{2}}}{2}\Phi ^2$,
i.e. about 1.00187.

It seems that the architech of the
``{\it Duomo di Prato}''
was Carboncettus marmorarius see
\cite{Cerretelli}
and
\cite{Fantappie}.
For these reasons
one can speak about
Carboncettus octagon!

\section*{Acknowledgements}
I thank my brother Mario,
lecturer at the Educandato
Santissima Annunziata in Firenze,
for the program that permitted us
to build
the table of this article.
I thank also
Maurizio Aristodemo, 
Lorenzo Bussoli,
Luigi Barletti,
Gabriele Bianchi,
Gabriele Villari,
for a first reading of this paper.
I thank the Dipartimento di Matematica e Informatica
``Ulisse Dini''
for his hospitality
and
the Project Interomics of CNR for the financial support..

\bibliography{library}

\bibliographystyle{abbrv}

\end{document}